\DeclareMathOperator{\dist}{dist}
\newtheorem{theorem}{Theorem}[section]
\newtheorem{lemma}[theorem]{Lemma}
\newtheorem{proposition}[theorem]{Proposition}
\newtheorem{definition}[theorem]{Definition}
\newtheorem{assumption}[theorem]{Assumption}
\def \RR {\mathbb{R}}  
\def \NN {\mathbb{N}}  
\def\cC {\mathcal C}
\def \o {\omega}
\def \p {\partial}
\def \ep {\varepsilon}
\numberwithin{equation}{section}
\begin{document}

\title{On nonexistence of splash singularities for the $\alpha$-SQG patches}

\author{Alexander Kiselev}
\address{Department of Mathematics, Duke University, Durham, NC 27708}
\email{kiselev@math.duke.edu}


\author{Xiaoyutao Luo}
\address{Department of Mathematics, Duke University, Durham, NC 27708}
\email{xiaoyutao.luo@duke.edu}

\date{\today}

\begin{abstract}
In this paper, we consider patch solutions to the $\alpha$-SQG equation and derive new criteria for the absence of splash singularity
where different patches or parts of the same patch collide in finite time. Our criterion
refines a result due to Gancedo and Strain \cite{GS}, providing a condition on the growth of curvature of the patch necessary
for the splash and an exponential in time lower bound on the distance between patches with bounded curvature.
\end{abstract}
\maketitle

\section{Introduction}
Recall that the family of $\alpha$-SQG equations is given by
\begin{equation}\label{eq:alpha_SQG}
\begin{cases}
\p \omega + u \cdot \nabla  \omega =0,&\\
u = \nabla^{\perp}(- \Delta )^{-1+\alpha } \omega.
\end{cases}
\end{equation}
where $ \nabla^{\perp} = (\p_{x_2} , - \p_{x_1})$ denotes the perpendicular gradient.
The value $\alpha=0$ in \eqref{eq:alpha_SQG} corresponds to the Euler equation, and
$\alpha = 1/2$ to the SQG equation. In general, models with $\alpha$ in the range
$(0,1)$ have been considered \cite{CIW,CCCGW}.
The $\alpha$-SQG equations appear in atmospheric and ocean science (see \cite{CMT,Held}), and model
evolution of temperature near the surface of a planet.
Mathematically, the SQG equation has some similarities with the 3D Euler equation \cite{CMT},
and has been a focus of much attention in recent years. The global regularity vs finite time blow up
question for smooth initial data remains open for any $1>\alpha>0.$
 A singular scenario, closing front, has been presented in \cite{CMT}. However, later rigorous work
\cite{Cord,CordF,CFdL} has shown that finite time blow up cannot happen in this scenario.

The SQG equation is in particular used to model frontogenesis: an interface with a sharp jump
of temperature across it. In this context, patch solutions are natural. These are weak solutions
of the equation that have form
\[ \omega(x,t) = \sum_{j=1}^n \theta_j \chi_{\Omega_j(t)}(x), \]
where $\theta_j$ are constants, $\chi_S$ denotes the characteristic function of the set $S$,
and $\Omega_j(t)$ are disjoint, regular regions evolving in time according to the Biot-Savart law \eqref{eq:alpha_SQG}
(this will be made more precise later). In the context of patches, the global regularity question is whether the
patch solution conserves the initial regularity class of the boundaries $\partial \Omega(0),$ and whether
different patches can collide or self-intersect.
The existence and uniqueness of patch solutions for the 2D Euler
equation is a consequence of Yudovich theory (see \cite{Yudth,MP}), and global regularity
has been proved by Chemin \cite{c}. For $\alpha>0,$ even the existence of patch solutions is not trivial.
Local existence and uniqueness results of $\alpha$-SQG patch solutions have been
proved in \cite{Rodrigo,g,CCG,CCCGW}.
Numerical simulation in \cite{CFMR} indicated a possible splash singularity where two patches touch each other with simultaneous
formation of corners at the touch point, yet rigorous understanding of the phenomenon remained missing.
For small $\alpha>0,$ finite time singularity
formation has been proved for patches in the half-plane setting \cite{KRYZ}.
This singularity formation happens near the hyperbolic point of the flow on the boundary, and in a scenario similar to very fast small scale
growth in solutions to 2D Euler equation \cite{KS,KL} and conjectured singularity formation in the 3D Euler
Hou-Luo scenario \cite{HL}. On the other hand, there are also recent numerical simulations by Scott and Dritschel \cite{SD1,SD2}
which suggests a different pathway towards a singularity. In \cite{SD1}, an intricate self-similar cascade of filament instabilities
is explored, where the picture roughly repeats in different locations at a geometric sequence of decreasing length scales and time intervals. In \cite{SD2},
it is suggested that filament pinching may happen in a simpler fashion, at one of the stages of the previous instability cascade.
This filament pinching might be of the type of splash singularity, where different parts of the patch boundary touch each other.
The formation of a splash singularity has been rigorously established for water waves \cite{CCFGG,CS}, but the difference with $\alpha$-patch case
is that the wave interfaces can remain regular near the intersection. This cannot happen for the SQG patches, at least not in a simple way,
as the parts of the patch(es) with bounded
curvature will never collide as shown by Gancedo and Strain \cite{GS}. Thus a simple $\alpha$-SQG splash singularity can only happen along with
the loss of regularity of the patch boundary, and rigorous examples of it remain missing possibly apart from \cite{KRYZ} (where a boundary is
present and the precise picture of singularity is not established).

In this note, our goal is to sharpen the criterion of ruling out splash singularity for the $\alpha$-SQG models, in order to understand what
kind of phenomena - specifically, the rate of growth of certain norms associated with patch regularity -  must appear for the splash to occur.
We also prove a sharper separation result for patches with bounded curvature,
improving the double exponential bound of \cite{GS} to just exponential in time. The key observation for these results is an
additional folding odd symmetry in the Biot-Savart law.

\subsection{Criteria of no splash singularities}

In the context of this paper, we will interpret the splash singularity as in the following definition; we will recall the notion of patch solution
more precisely below in Section \ref{ps}.
\begin{definition}\label{defss}
Let $k\in \NN$ and $\gamma \in [0,1]$. We say that a $C^{k,\gamma}$-patch solution $\Omega(t) = \cup_{j=1}^N \Omega_j(t)$ on $[0,T)$ develops a splash singularity as $t \to T^-$ if there exists $\ep>0$ and a fixed ball $B_\rho(x_0),$ $\rho>0 $ such that on $[T-\ep, T)$ all of the following holds.
\begin{itemize}
    \item There are only two disjoint branches $\cC_1(t)$ and $\cC_2(t)$ of the interface in the ball $B_\rho(x_0)$ that are simple curves, 
and $\cC_1(t)$ and $\cC_2(t)$ touch at a single point $x_0$ as $t\to T^-$.

\item  In the complement of the ball $B_\rho(x_0)$, the patch solution remains regular in the whole time interval  $[0,T]$: the $C^{k,\gamma}$ norms of the patches remain
bounded, different patches do not touch and individual patches do not self-intersect.

\item The $C^{k,\gamma}$ regularity may be lost at time $T$ as interfaces develop singular structures,
but the singularity is localized at $x_0.$
\end{itemize}

\end{definition}
For the sake of simplicity, we can think of all norms defined in terms of intrinsic  arc length parametrization, with
\[ \|\p \Omega(t)\|_{C^{k,\gamma}} = {\rm max}_{j}\left( \sum_{l=0}^k \|\partial_s^l x_j(t)\|_\infty + {\rm sup}_{s,r} \frac{|\partial_s^k x_j(s)- \partial_s^k x_j(r)|}{|s-r|^\gamma} \right),\]
where $x_i$ is the arc length parametrization of the $i$th patch $\Omega_i(t)$ and $s, r$ are arc length parameters.
The lack of self-intersection outside $B_\rho(x_0)$ can be rigorously
defined as a positive lower bound for the arc chord ratio: \[ {\rm min}_{j,x_j(s),x_j(r)\notin B_\rho} \frac{|x_j(s)-x_j(r)|}{|s-r|} \geq c>0. \]

\begin{theorem}\label{thm:main_thm_no_splash}
Let $0 < \alpha < 1$, $k :=    \lceil 2 \alpha \rceil$ and $  1 \geq \gamma  \geq   2 \alpha  - \lceil 2 \alpha \rceil +1.$ Suppose that $\o$ is a $C^{k,\gamma}$-patch solution to \eqref{eq:alpha_SQG} on $[0,T)$
that forms a splash singularity at time $T.$ Then we must have
\begin{equation}\label{m1} \int_0^T \|\p \Omega(t)\|_{C^{k,\gamma}}^{\frac{ k+ 2\alpha -1   }{k+\gamma- 1 }}\,dt = \infty. \end{equation}
\end{theorem}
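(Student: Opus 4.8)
The plan is to argue by contradiction: suppose a splash singularity forms at time $T$, yet the time integral in \eqref{m1} is finite. I would set up a quantity measuring the minimal distance between the two colliding branches $\cC_1(t)$ and $\cC_2(t)$ (or the minimal arc–chord–type distance for a self-touching patch), and show that under the finiteness assumption this distance cannot reach zero in finite time. The central object is a normalized gap functional such as
\[
\delta(t) = \min_{s,r} \frac{|x(s,t) - x(r,t)|}{F(|s-r|)},
\]
or more simply $d(t) = \operatorname{dist}(\cC_1(t),\cC_2(t))$, and the goal is a differential inequality bounding $\dot d$ from below (equivalently bounding the rate of closing from above) by the velocity field evaluated on the interface.

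**First I would** estimate the normal velocity of the patch boundary near the near-collision point. The velocity is given by the Biot–Savart law $u = \nabla^\perp(-\Delta)^{-1+\alpha}\omega$, whose kernel is of order $|x-y|^{-(1+2\alpha-1)} = |x-y|^{-2\alpha}$ after differentiation (the relevant singularity exponent matches the $2\alpha$ appearing in \eqref{m1}). The key is to bound the relative velocity of the two closest points on $\cC_1$ and $\cC_2$. A naive bound on the singular integral produces a factor like $d(t)^{1-2\alpha}$ when $2\alpha < 1$, which is already integrable and forbids collision with no regularity input; the interesting regime is where curvature and higher regularity genuinely enter, and this is exactly why the exponent $k = \lceil 2\alpha\rceil$ and the Hölder threshold $\gamma \geq 2\alpha - \lceil 2\alpha\rceil + 1$ are chosen so that $C^{k,\gamma}$ control is precisely what tames the kernel. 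I expect the folding odd symmetry highlighted in the introduction to be the crucial mechanism: writing the contribution from the nearby branch as a difference of kernel evaluations at reflected points produces a cancellation that upgrades the singular kernel's effective order, trading one power of the gap for a factor controlled by the $C^{k,\gamma}$ norm and the arc-length separation.

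**The heart of the estimate** is to show that the closing speed satisfies something of the form
\[
\left|\frac{d}{dt}\, d(t)\right| \;\lesssim\; \|\partial\Omega(t)\|_{C^{k,\gamma}}^{\,\beta}\; d(t)^{\,\mu},
\]
where the exponents $\beta$ and $\mu$ are dictated by how many derivatives/Hölder regularity are consumed to cancel the singularity of order $2\alpha$. Matching the differential inequality to an integrability threshold, I would arrange $\mu = 1$ (so that $d(t)$ decays at worst like an exponential of the integrated norm), which forces the exponent on the norm to be
\[
\beta = \frac{k + 2\alpha - 1}{k + \gamma - 1},
\]
exactly the power in \eqref{m1}. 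Integrating the inequality $\frac{d}{dt}\log d(t) \gtrsim -\|\partial\Omega(t)\|_{C^{k,\gamma}}^{\beta}$ over $[0,T)$ then gives $\log d(T) \geq \log d(0) - C\int_0^T \|\partial\Omega\|_{C^{k,\gamma}}^{\beta}\,dt > -\infty$, so $d(T) > 0$, contradicting the collision.

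**The main obstacle** will be the derivation of the differential inequality with the correct exponent $\beta$: extracting the sharp gain from the folding symmetry requires a careful Taylor-expansion of the interface parametrization near the touching point, splitting the singular integral into a near-field piece (where the symmetry-induced cancellation and the $C^{k,\gamma}$ Hölder modulus are used to absorb the $|x-y|^{-2\alpha}$ singularity) and a far-field piece (handled by the positive lower bound on the arc–chord ratio outside $B_\rho(x_0)$). Getting the bookkeeping of derivatives exactly right—so that precisely $k$ derivatives plus $\gamma$ Hölder regularity are spent, yielding the stated fraction—is the delicate point; a suboptimal split would produce a weaker, non-sharp exponent and fail to refine the Gancedo–Strain bound.
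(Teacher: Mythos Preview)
Your proposal is correct and follows essentially the same route as the paper: estimate the difference of normal velocities at a pair of closest points via the folding odd symmetry and a near-field/far-field split to obtain $|u_n(p)-u_n(q)|\lesssim \|\partial\Omega\|_{C^{k,\gamma}}^{(k+2\alpha-1)/(k+\gamma-1)}|p-q|$, then turn this into the Gronwall inequality $d'(t)\geq -C\|\partial\Omega\|_{C^{k,\gamma}}^{(k+2\alpha-1)/(k+\gamma-1)}d(t)$ and conclude. One small inaccuracy in a side remark: a naive closing-speed bound of order $d^{1-2\alpha}$ does \emph{not} by itself forbid collision for any $\alpha>0$ (the corresponding ODE reaches zero in finite time), so regularity input is genuinely needed throughout the range; this does not affect your main argument.
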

In particular, the curvature of $\p \Omega$ controls splash singularity for all $\alpha \leq 1/2$ (as was shown in \cite{GS} for $\alpha=1/2$).
In fact, for $\alpha=1/2$ case we can replace $C^2$ norm of \cite{GS}  by $C^{1,1}$ norm  in \eqref{m1} (functions with Lipschitz derivative).

\subsection{Exponential bound of minimal distance}

In the case where we have a priori control on growth of the appropriate norms of patch solution, we can derive a lower bound on separation distance between different parts of the patch boundary provided that at any time the minimal distance is achieved at two points with certain properties. In particular, we need to make an assumption limiting the nature of how the patch boundary can approach itself.
\begin{assumption}\label{as1}
Assume that the $C^{k,\gamma}$ patch solution $\Omega(t)$ satisfies the following property: there exists $\eta>0$ and $c>0$ such that for all $t \in [0,T)$ and all $i=1,\dots, n,$ $|x_i(s)-x_i(r)| \geq c|r-s|$ for all $s,r$ with $|s-r| \leq \eta.$
Here $x_i$ is the arc length parametrization of the $i$th patch $\Omega_i$.
\end{assumption}

We explain all the details later in the paper, but let us state here the main result (referring to forthcoming definitions).

\begin{theorem}\label{thm:main_thm_exp_bound}
Let $0 < \alpha < 1$,  $k:=   \lceil 2 \alpha \rceil$, and $1 \geq \gamma  \geq   2 \alpha  - \lceil 2 \alpha \rceil +1 $. Suppose that $\o$ is a $C^{k,\gamma}$-patch solution to \eqref{eq:alpha_SQG} on $[0,T )$
satisfying Assumption \ref{as1}. Let $d(t)$ be the separation distance defined in \eqref{ddef}. Assume in addition that for a.e. time $t \in [0,T],$ all points $p$, $q$ such as $|p-q|=d(t)$ are admissible in the sense of Definition \ref{def:admissible}.
Then for all $t \in [0,T],$ we have
\[ d(t) \geq d(0) \exp  \left(-C\int_0^t \|\p \Omega(t' )\|_{C^{k,\gamma}}^{\frac{k+ 2\alpha -1   }{k + \gamma -1  }} \,d t' \right), \]
$C=C(\alpha,k,\gamma, \eta).$
\end{theorem}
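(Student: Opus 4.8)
The plan is to bound the logarithmic derivative of the separation distance and close with Gr\"onwall's inequality. Since $d(t)$ in \eqref{ddef} is a minimum of the smooth functions $(s,r)\mapsto|x_i(s)-x_j(r)|$, it is Lipschitz, hence differentiable for a.e.\ $t$. At such a time let $p,q$ be any pair realizing $|p-q|=d(t)$; the first order optimality conditions force $p-q$ to be orthogonal to both interfaces, so $\hat n:=(p-q)/d(t)$ is the common unit normal at $p$ and $q$. As $p,q$ are carried by the flow, the envelope theorem yields
\[ \tfrac12\frac{d}{dt}d(t)^2=(p-q)\cdot\big(u(p)-u(q)\big), \]
the tangential variations dropping out. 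It therefore suffices to prove
\[ \big|(p-q)\cdot(u(p)-u(q))\big|\le C\,\|\p\Omega\|_{C^{k,\gamma}}^{\beta}\,d(t)^2,\qquad \beta=\tfrac{k+2\alpha-1}{k+\gamma-1}, \]
for a.e.\ $t$; integrating the resulting inequality $\frac{d}{dt}\log d(t)^2\ge-2C\|\p\Omega\|_{C^{k,\gamma}}^{\beta}$ gives the theorem. The hypothesis that every minimizing pair be admissible in the sense of Definition \ref{def:admissible} is what guarantees, for a.e.\ $t$, the interior/facing geometry and the local graph representation used below, while Assumption \ref{as1} supplies the arc--chord bound controlling all non-local interactions.

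Next I would pass to the contour form of the Biot--Savart law: by the divergence theorem $u(x)=\frac{c}{2\alpha}\sum_j\theta_j\oint|x-y|^{-2\alpha}\,\p_s y\,ds$, whence
\[ u(p)-u(q)=\frac{c}{2\alpha}\sum_j\theta_j\oint\big(|p-y|^{-2\alpha}-|q-y|^{-2\alpha}\big)\,\p_s y\,ds. \]
I split each contour at a scale $\delta\le\eta$ about the feet $p,q$. For $y$ in the far field ($|p-y|,|q-y|\gtrsim\delta$) the kernel difference is $O\big(d\,|p-y|^{-2\alpha-1}\big)$, and the arc--chord bound of Assumption \ref{as1} gives $\oint_{\mathrm{far}}|p-y|^{-2\alpha-1}\,ds\lesssim\delta^{-2\alpha}$, so this piece contributes $\lesssim d^2\,\delta^{-2\alpha}$. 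In the near field the integrand is singular, but here $p-y=\xi\,\tau_p+O(|\xi|^{1+\gamma})$ is tangential to leading order; since $\tau_p\perp(p-q)$, the leading (non-integrable, for $\alpha\ge1/2$) part of $u(p)$ is invisible after projecting onto $\hat n$, exactly as in \cite{GS}.

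The decisive point --- and the source of the improvement from the double exponential of \cite{GS} to a single exponential --- is that the \emph{normal} self-velocity at $p$ and at $q$ is individually of order one (a curved interface advects itself), so only their difference can be small, and one must exhibit a gain proportional to the separation $d$. Here I would exploit the odd folding symmetry of the kernel $K(p,y):=\frac{(p-y)^\perp}{|p-y|^{2+2\alpha}}$. Reflecting the integral for $u(q)$ through the midpoint $m=\frac12(p+q)$, which interchanges $p$ and $q$, and using $K(p,2p-y)=-K(p,y)$, one obtains the exact identity
\[ u(p)-u(q)=\frac{c}{2}\int K(p,y)\big[\o(y)-\o(2p-y)+\o(p+q-y)-\o(q-p+y)\big]\,dy. \]
The bracket is a mixed symmetric second difference of the patch indicator: it vanishes when the two interfaces near $p$ and $q$ are exact parallel translates, the reflection through $p$ cancels the odd (in particular the curvature) terms of the local expansion, and the shift by $q-p$ of size $d$ acts as a discrete derivative supplying the factor $d$. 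Estimating it through the local graphs and the $C^{k,\gamma}$ Hölder remainder, the near field is bounded by $d^2$ times a positive power of $\delta$ and a power $\ge 1$ of $\|\p\Omega\|_{C^{k,\gamma}}$ set by the curvature scale.

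Finally I would combine the near and far estimates into a bound of the form
\[ \big|(p-q)\cdot(u(p)-u(q))\big|\lesssim d^2\Big(\delta^{-2\alpha}+\|\p\Omega\|_{C^{k,\gamma}}^{\mu}\,\delta^{\sigma}\Big) \]
with $\mu\ge1$ and $\sigma>0$, and optimize the cutoff $\delta$ (truncated at $\eta$) as a negative power of $\|\p\Omega\|_{C^{k,\gamma}}$ to balance the two terms; carrying out this balance reproduces exactly the exponent $\beta=\frac{k+2\alpha-1}{k+\gamma-1}$, after which Gr\"onwall finishes the proof with $C=C(\alpha,k,\gamma,\eta)$. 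I expect the heart of the difficulty to lie in the near-field estimate at the borderline regularity $\gamma=2\alpha-k+1$: one must make the folding cancellation quantitative, track the gain of $d$ uniformly across the two regimes $|\xi|\lesssim d$ and $|\xi|\gg d$ (where the effective length scale switches from $d$ to the arclength), and verify that no logarithmic factor --- the mechanism forcing the double exponential in \cite{GS} --- survives the optimization. The most delicate case is $i=j$, a single branch folding onto itself, where the reflected points $2p-y$ and $p+q-y$ may themselves approach the interface; this is precisely where admissibility (Definition \ref{def:admissible}) is needed.
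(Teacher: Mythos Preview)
Your overall plan matches the paper: bound $|u_n(p)-u_n(q)|\lesssim\|\p\Omega\|_{C^{k,\gamma}}^{\beta}d(t)$ at admissible minimizing pairs (this is the paper's Proposition~\ref{prop:main_alpha_all}), then apply Gr\"onwall. The gap is in your near-field cancellation. The paper's ``folding odd symmetry'' is reflection \emph{across the line through $p$ and $q$}: with $p$ at the origin and $q=(0,\delta)$, the normal kernel $y_1/|y|^{2+2\alpha}$ is odd in $y_1$, so the contribution from the region near $p$ localizes to the strip between $f(y_1)$ and $f(-y_1)$, of width $|f(y_1)-f(-y_1)|\lesssim\|\p\Omega\|_{C^{k,\gamma}}|y_1|^{k+\gamma}$ because $f(0)=f'(0)=0$ and, crucially for $k=2$, the quadratic term $\tfrac12 f''(0)y_1^2$ is \emph{even} in $y_1$ and cancels under this fold. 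The contribution near $q$ is handled the same way with $g$; the two interfaces are never compared to one another. This is what produces the exponent $\beta$ after choosing the scale $r$ as in \eqref{rad}.

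Your identity uses \emph{point} reflections through $p$ and the midpoint instead. The identity itself is correct, but your description of its effect is wrong: curvature contributes an even term, so ``reflection through $p$ cancels the odd (in particular the curvature) terms'' is backwards. More seriously, your bracket compares the interface near $p$ to the translated interface near $q$; its support has width $|f(y_1)-(g(y_1)-\delta)|$, which for $k=2$ is generically only $O(y_1^2)$ since $f''(0)$ and $g''(0)$ need not agree, not $O(|y_1|^{2+\gamma})$. Without a further step extracting the odd-in-$y_1$ part of the bracket (i.e., reverting to the paper's fold), the near-field estimate does not close at the borderline regularity $\gamma=2\alpha-k+1$, and your optimization in $\delta$ will not reproduce $\beta$. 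A smaller issue: Assumption~\ref{as1} only gives the arc--chord bound for $|s-r|\le\eta$, so your far-field contour estimate also needs the admissibility hypothesis of Definition~\ref{def:admissible} to keep other branches away; the paper sidesteps this by working with the area form throughout and fixing the scale $r$ from the outset.
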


Note that if the curvature of $\p \Omega(t)$ is bounded uniformly in time, we obtain an exponential in time lower bound on how quickly the patch boundaries can approach for $0 < \alpha \leq 1/2.$

We remark that as this paper was being completed, Andrej Zlatos \cite{AZ} has told us that jointly with Junekey Jeon they are able to show absence of patch singularities for $\alpha$-patches with bounded $C^{1,2\alpha}$ norm
without additional assumptions on the geometry of the solution for $0 \leq \alpha < 1/4.$


\section{Preliminaries}


\subsection{Definition of a patch solution}\label{ps}
The explicit form of the Biot-Savart law for the $\alpha$-SQG equation \eqref{eq:alpha_SQG} that is valid for smooth $\omega$  is given by
\begin{equation}\label{eq:biot-savart}
u(x, t) = P.V. \int_{\RR^2} \frac{(x- y)^\perp }{|x - y|^{2 + 2\alpha }} \o(y,t) \, dy
\end{equation}
(we omit the constant $c(\alpha)$ in front of the integral).
For patch solutions and $\alpha \geq 1/2,$ the tangential to patch component of the velocity is infinite even for smooth patches, so we will
only deal with normal component defining the patch evolution.
We adapt the definition of patch solution similar to \cite{KRYZ}. Recall that the Hausdorff distance between any two sets $A$ and $B$ is given by
\[ d_H(A,B)= {\rm max} ({\rm sup}_{a \in A} {\rm dist}(a,B), \,\, {\rm sup}_{b \in B} {\rm dist}(A,b)). \]
\begin{definition}\label{splash1016}
Let $T>0$, $N<\infty$ be an integer, and $\theta_i \in \RR$ for $1\leq i \leq N$. Suppose that $\Omega_i (t) \subset \RR^2$ are bounded open sets whose boundaries $ \p \Omega_i (t)$ are pairwise disjoint closed $C^1$ curves for every $t\in [0 ,T) $. Let $\Omega = \bigcup_{1\leq i \leq N} \Omega_i$ and
$$
\omega( \cdot ,t) : = \sum_{1\leq i\leq N} \theta_i \chi_{\Omega_i(t)}.
$$

 We say $\omega$ (or just $\Omega$) is a patch solution for the $\alpha$-SQG equation on $[0,T )$ if the followings are satisfied.
\begin{enumerate}
\item Each $ \p \Omega_i$ is continuous in $t$ with respect to the Hausdorff distance $d_H$.

\item Denote $\p \Omega (t) : = \cup_i \p \Omega_i (t).$ Then for every $t$ while patch solution exists
\begin{equation}
\lim_{h \to 0+} \frac{d_H \left (\p \Omega (t+h), X_{u_n(\cdot, t)}^h[\p \Omega (t ) ]    \right)}{h} =0
\end{equation}
where $X_{v}^h[E ]: =\{x + h v(x): \, v\in E \} $ and $u_n = (u \cdot n)n$ is the normal to the boundary $\partial \Omega(\cdot, t)$ component of the
velocity field given by \eqref{eq:biot-savart}.
\end{enumerate}
\end{definition}
The tangential component of the velocity \eqref{eq:biot-savart} is infinite for any regularity of the boundary starting from $\alpha=1/2,$
and this explains our difference with the definition of \cite{KRYZ} where only small values of $\alpha$ were considered.

The following elementary lemma shows that $u_n = (u \cdot n)n$ is well-defined for patches with even less regularity than our case; 
in the above formula, we first take the inner product with the normal at $x \in \p \Omega$ and then integrate according to \eqref{eq:biot-savart}.
In the next lemma, we abuse notation and denote by $\Omega$ a single fixed $C^{1,\gamma}$ domain (not necessarily a patch solution).

\begin{lemma}\label{uinf}
Let $0 < \alpha <1,$ and suppose
that $\Omega$ is a  compact, connected $C^{1,\gamma}$ domain with $\gamma  >  {\rm max}(0,2 \alpha  - 1)$.
Then $u_n = (u \cdot n)n$, the normal to $\p \Omega$ component of the velocity $u$, computed according to \eqref{eq:biot-savart} with $\omega(x) =\chi_\Omega(x)$ is well defined and finite at all points on $\p \Omega,$ and is continuous on $\p \Omega$ with
\begin{equation}\label{ubd}
\|u_n\|_{C(\partial \Omega)} \leq C(\Omega).
\end{equation}
\end{lemma}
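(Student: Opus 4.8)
The plan is to isolate the singular behaviour of the integral near the evaluation point and to extract the needed cancellation from an odd reflection symmetry; away from the singularity everything is harmless because $\Omega$ is bounded. Concretely, I fix $x \in \partial\Omega$ and split the integral defining $u(x)\cdot n(x)$ into a near part over $\Omega \cap B_\delta(x)$ and a far part over $\Omega \setminus B_\delta(x)$, where $\delta>0$ is a length scale, uniform in $x$, on which $\partial\Omega$ is a single $C^{1,\gamma}$ graph; such a uniform $\delta$ and a uniform modulus exist by compactness of $\partial\Omega$. On the far part the kernel is bounded, $|(x-y)^\perp \cdot n(x)|\,|x-y|^{-2-2\alpha} \le |x-y|^{-(1+2\alpha)} \le \delta^{-(1+2\alpha)}$, so this contribution is finite, bounded by $C(\Omega)$, and manifestly continuous in $x$.

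For the near part I pass to adapted coordinates: apply a rigid motion taking $x$ to the origin, $n(x)$ to $e_2$ and the tangent to $e_1$, so that $\partial\Omega \cap B_\delta$ becomes a graph $y_2=\phi(y_1)$ with $\phi(0)=\phi'(0)=0$ and $|\phi(y_1)|\le C|y_1|^{1+\gamma}$, the constant $C$ uniform in $x$. Since the operators $(\cdot)^\perp$ and $\,\cdot\,n(x)$ are invariant under rotation, the normal component of the kernel simplifies to $(x-y)^\perp\cdot n(x)\,|x-y|^{-2-2\alpha}=y_1\,|y|^{-2-2\alpha}$, which is \emph{odd} in $y_1$. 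The domain $\Omega\cap B_\delta$ agrees with the flat half-plane $H=\{y_2<0\}\cap B_\delta$ except on the thin sliver $\{|y_2|\le|\phi(y_1)|\}$ between the graph and the $y_1$-axis.

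The crux is the decomposition of the truncated integral $I_\rho=\int_{(\Omega\cap B_\delta)\setminus B_\rho} y_1\,|y|^{-2-2\alpha}\,dy$. Writing $\chi_{\Omega\cap B_\delta}=\chi_H+(\chi_{\Omega\cap B_\delta}-\chi_H)$, the half-plane piece $\int_{(H\cap B_\delta)\setminus B_\rho} y_1\,|y|^{-2-2\alpha}\,dy$ vanishes identically for every $\rho>0$, because the domain is symmetric under $y_1\mapsto -y_1$ while the integrand is odd; this is exactly the folding symmetry. The remaining sliver piece is absolutely convergent: on the sliver $|y|\approx|y_1|$ and the $y_2$-width is at most $|\phi(y_1)|\le C|y_1|^{1+\gamma}$, so it is dominated by $C\int_0^\delta |y_1|^{\gamma-2\alpha}\,dy_1$, which is finite precisely because $\gamma>\max(0,2\alpha-1)$ forces $\gamma-2\alpha>-1$. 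Hence the principal value exists and the near part is bounded by $C(\Omega)$ uniformly in $x$, which combined with the far part yields \eqref{ubd}.

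Continuity I obtain by realizing $u_n$ as a uniform limit of continuous functions. For $\rho\in(0,\delta]$ set $u_n^\rho(x)=n(x)\cdot\int_{\Omega\setminus B_\rho(x)}(x-y)^\perp\,|x-y|^{-2-2\alpha}\,dy$; each $u_n^\rho$ is continuous since the kernel is bounded by $\rho^{-(1+2\alpha)}$ and both $\Omega\setminus B_\rho(x)$ and $n(x)$ vary continuously in $x$. The same sliver estimate gives $|u_n(x)-u_n^\rho(x)|\le C\rho^{\,1+\gamma-2\alpha}\to 0$ uniformly in $x$, so $u_n=\lim_{\rho\to0}u_n^\rho$ is continuous. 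The main obstacle, and the only place the hypothesis $\gamma>2\alpha-1$ is really used, is the regime $\alpha\ge 1/2$ in which the kernel fails to be absolutely integrable: there the exact cancellation on the flat half-plane is indispensable, and the argument hinges on (i) choosing coordinates in which this symmetry is manifest and (ii) matching the borderline integrability $\int_0^\delta|y_1|^{\gamma-2\alpha}\,dy_1<\infty$ to the curvature modulus of the $C^{1,\gamma}$ boundary. This is also consistent with the remark preceding the lemma, since the cancellation is special to the normal direction and the tangential component genuinely diverges for $\alpha\ge 1/2$.
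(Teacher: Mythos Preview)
Your proof is correct and follows essentially the same approach as the paper: adapted tangent--normal coordinates, exploitation of the oddness of $y_1|y|^{-2-2\alpha}$ to reduce the near contribution to a sliver of thickness $O(|y_1|^{1+\gamma})$, and the resulting integrability $\int_0^\delta |y_1|^{\gamma-2\alpha}\,dy_1<\infty$ under $\gamma>2\alpha-1$. The only cosmetic differences are that the paper folds the domain onto itself (comparing $f(y_1)$ with $f(-y_1)$) rather than subtracting a flat half-plane, and obtains continuity via Vitali's convergence theorem rather than your uniform-limit argument with truncations; both choices are equivalent in strength.
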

\begin{proof}
Fix a point on $\p \Omega,$ and choose a system of coordinates with this point at its origin such that $x_1$ axis is along the tangent to $\p \Omega$.
Then
\begin{equation}\label{uexp} u_n \cdot n = -P.V. \int_{\Omega} \frac{y_1}{|y|^{2+2\alpha}} \,d y. \end{equation}
Consider a square $S_R=[-R,R]^2$ with $R= 0.1 \|\p \Omega\|_{C^{1,\gamma}}^{-1}$ centered at the origin, with one of its sides parallel to $y_1$ axis.
Let $f(y_1)$ be a function such that
the graph $y_2= f(y_1)$ within $S_R$ coincides with the part of $\p \Omega$ containing the origin. If $S_R$ contains more than the curve of $\p \Omega$
passing through the origin, make $R$ smaller so that there is only one curve of $\p \Omega$ in it.
The part in \eqref{uexp} coming from integration over
$S_R^c$ can be estimated from above by
\begin{equation}\label{linf1} \int_{|y| \geq R} |y|^{-1-2\alpha} \chi_{\Omega}(y)\,dy \leq C(\Omega) {\rm max}(1,R^{1-2\alpha}) \end{equation}
due to the compactness of $\Omega.$
To exploit the odd in $y_1$ symmetry, for any $y_1 \geq 0$ we introduce
\begin{equation}\label{ulf}
\overline{f}(y):= \max\{f(y), f(-y)) \} \quad \text{and }\quad \underline{f}(y):= \min\{f(y), f(-y)) \}.
\end{equation}
Then, with \eqref{ulf} we have
\begin{equation}\label{aux1017} \left| \int_{\Omega \cap S_{R}} \frac{y_1}{|y|^{2+2\alpha}} d \,y \right| \leq \int_0^{R} \int_{\underline{f}(y_1)}^{\bar{f}(y_1)} \frac{y_1}{|y|^{2+2\alpha}}\,dy_2dy_1. \end{equation}
To facilitate the estimates let us note that since $f$ is $C^{1, \gamma}$ and $f(0)=f' (0)= 0$, we have
$$
| \overline{f}(y_1) -    \underline{f}(y_1)| \leq \|\p \Omega   \|_{C^{1,\gamma} }  |y_1|^{1+\gamma } \quad \text{for all $ |y_1| \leq R$}.
$$
Also, in the region of integration on the right of \eqref{aux1017} $y_2 \leq y_1$ due to the regularity of $\p \Omega$ and choice of $R$.
Then we can continue the estimate \eqref{aux1017} and obtain upper bound by
\begin{equation}\label{Rbound} C(\Omega) \int_0^{R} y_1^{2+\gamma} y_1^{-2-2\alpha} dy_1 \leq C(\Omega)R^{1+\gamma-2\alpha}. \end{equation}
Given that $R$ only depends on $\Omega,$ the bounds \eqref{linf1} and \eqref{Rbound} imply \eqref{ubd} for the $L^\infty$ norm of $u_n.$
The continuity of $u_n$ on $\p \Omega$ follows by Vitali convergence theorem. Indeed, let us perform
an odd reflection like above in the integrals defining $u_n(x)$ and $u_n(x').$ Then the regions of integration and integrands converge pointwise
as $x' \rightarrow x$ along $\p \Omega$. Also, the integrands are uniformly integrable due to elementary estimates using the structure of integration
regions similarly to \eqref{aux1017}, \eqref{Rbound}.
\end{proof}

In this note, we do not discuss questions of the existence and uniqueness of patch solutions. Existence of patch solutions with $C^\infty$ or Sobolev regularity
follows from the contour equation analysis in \cite{g,Rodrigo} for $0<\alpha \leq 1/2,$ and in \cite{CCCGW} for $\alpha > 1/2.$
The uniqueness of patch solutions in the whole space setting is known for $\alpha \leq 1/2$ \cite{g,CCG} and is open to the best of our knowledge in the case $\alpha>1/2.$

\section{Absence of splash singularities}

\subsection{Geometric configuration}

Given our definition of splash singularity, starting from time $t=T-\ep$ there exists a fixed ball $B_\rho(x_0)$ and
a pair of points $p(t),q(t) \in B_\rho$ such that $|p-q|$ is the minimal distance between any two patches (or the maximum of the arc-chord condition if it is different
parts of the same patch that form splash singularity). Furthermore, for each $t \in [T-\ep, T),$ $\partial \Omega(t) \cap B_\rho$  consists of two
disjoint simple $C^{k,\gamma}$ curves one of which contains $p$ and the other $q.$
Since there is only one touching point $x_0$ at time $T$ and the motion of $\p \Omega_j$ is continuous
in time, we can freely assume that any such pair $p,q \in B_{\rho/4}(x_0):$ this will be true starting from some time $t \geq T-\ep.$



\subsection{Estimates for the velocity difference}

We now prove a result on relative velocity of the points $p$ and $q$ inspired by the above discussion. Given $k$ and  $ \gamma,$ define
\begin{equation}\label{rad} r(t) =  \frac14 {\rm min} (\rho, (0.01 \|\partial \Omega(t)\|_{C^{k,\gamma}})^{-\frac{1}{k+\gamma - 1}}).
\end{equation}
Without loss of generality, we will assume that $\rho \leq 1$ and so $r<1.$

\begin{definition}\label{def:admissible}
Let $0 < \alpha < 1$, $k:=    \lceil 2 \alpha \rceil,$ and $1 \geq \gamma  \geq   2 \alpha  - \lceil 2 \alpha \rceil +1$.  Suppose that $\o$ is a $C^{k,\gamma}$-patch solution  of \eqref{eq:alpha_SQG} and $\Omega (t)$ is the union of the patches.

We say $p,q \in \p \Omega  (t)$ is a pair of admissible points at time $t$ if
\begin{itemize}

\item  $\p \Omega(t) \cap (B_{2r}(p) \cup B_{2r} (q))$ consists of two disjoint curves $\cC_1$ and $\cC_2,$ one containing $p$ and the other $q;$
here $r$ is given by \eqref{rad}.

\item the distance between $p$ and $q$ forms a local minimum distance, i.e. 
$$
|p- q| = \dist (\cC_1, \cC_2).
$$
\end{itemize}
\end{definition}
Note that in general, there could be other patches between $\cC_1$ and $\cC_2$ for example if $|p-q| \gg r$ -
the definition of admissible points does not preclude that.

\begin{figure}[hb]
 \centering
\begin{tikzpicture}[scale=0.8]

\draw  [dash pattern={on 4.5pt off 4.5pt}, color=red!100] (-3,-3) -- (-3,3.5) -- (3,3.5) -- (3,-3) -- cycle
;

\draw [fill={rgb, 255:red, 210; green, 231; blue, 255 }  ,fill opacity= 1 ] (-3,1.0) .. controls (-2,0.9) and (-1, 0.5) .. (0,0.5) .. controls (1,0.5) .. (3,0.7) -- (3,3.5) -- (-3, 3.5) --(-3,1.0);

\draw [fill={rgb, 255:red, 210; green, 231; blue, 255 }  ,fill opacity= 1 ] (-3,-0.5) .. controls (-2,-0.4) and (-1, 0.0) .. (0,0) .. controls (1,0) .. (3,-0.2) -- (3,-3) -- (-3, -3) --(-3,-0.5);

\draw[->] (0,0) -- (0,5) ;
\draw[->] (-4,0) -- (4,0) ;


\draw[thick,color= blue!50] (-3,-0.5) .. controls (-2,-0.4) and (-1, 0.0) .. (0,0) .. controls (1,0) .. (3,-0.2);

\draw[thick,color= blue!50] (-3,1.0) .. controls (-2,0.9) and (-1, 0.5) .. (0,0.5) .. controls (1,0.5) .. (3,0.7);

\filldraw
		(0,0) circle (1pt) node[anchor=north] {\scriptsize $p $}
        (0,0.5) circle (1pt) node[anchor=south west] {\scriptsize $q=(0,\delta) $}
          (0,3.5) circle (1pt) node[anchor=south west] {\scriptsize $(0,r +\delta )  $};

\draw 	(-2.8,1.2) node[anchor=west] {\tiny $g(y_1)$} ;
\draw 	(-2.8,-0.7) node[anchor=west] {\tiny $f(y_1)$} ;
\filldraw 	(-3,0)circle (1pt) node[anchor=south east] {\tiny $-r$} ;
\filldraw 	(3,0)circle (1pt) node[anchor=south west] {\tiny $r$} ;

\end{tikzpicture}
\begin{tikzpicture}[scale=0.8]

\draw  [dash pattern={on 4.5pt off 4.5pt}, color=red!100] (-2,-2) -- (-2,5) -- (2,5) -- (2,-2) -- cycle
;

\draw [fill={rgb, 255:red, 210; green, 231; blue, 255 }  ,fill opacity=1 ] (-2,3.5) .. controls (-2,3.4) and (-1, 3.0) .. (0,3) .. controls (1,3 ) .. (2,3.2)  -- (2,5) -- (-2,5)-- (-2,-0.5);

\draw [fill={rgb, 255:red, 210; green, 231; blue, 255 }  ,fill opacity=1 ] (-2,-0.5) .. controls (-2,-0.4) and (-1, 0.0) .. (0,0) .. controls (1,0) .. (2,-0.2) -- (2,-2) -- (-2,-2) --(-2,-0.5);

\draw[->] (0,0) -- (0,6) ;
\draw[->] (-4,0) -- (4,0) ;


\draw[thick,color= blue!50] (-2,-0.5) .. controls (-2,-0.4) and (-1, 0.0) .. (0,0) .. controls (1,0) .. (2,-0.2);

\draw[thick,color= blue!50] (-2,3.5) .. controls (-2,3.4) and (-1, 3.0) .. (0,3) .. controls (1,3 ) .. (2,3.2);

\filldraw
		(0,0) circle (1pt) node[anchor=north] {\scriptsize $p $}
        (0,3) circle (1pt) node[anchor=south west] {\scriptsize $q=(0,\delta) $}
          (0,5) circle (1pt) node[anchor=south west] {\scriptsize $(0,r +\delta ) $};

\draw 	(-1.8,3.5) node[anchor=west] {\tiny $g(y_1)$} ;
\draw 	(-1.8,-0.5) node[anchor=west] {\tiny $f(y_1)$} ;
\filldraw 	(-2,0)circle (1pt) node[anchor=south east] {\tiny $-r$} ;
\filldraw 	(2,0)circle (1pt) node[anchor=south west] {\tiny $r$} ;

\end{tikzpicture}
\caption{Illustration of a pair of admissible points $p,q$. The left has $|p- q| \ll r$ while the right has $|p-q| \gg r$.}
\label{fig:splash_scenario}
\end{figure}
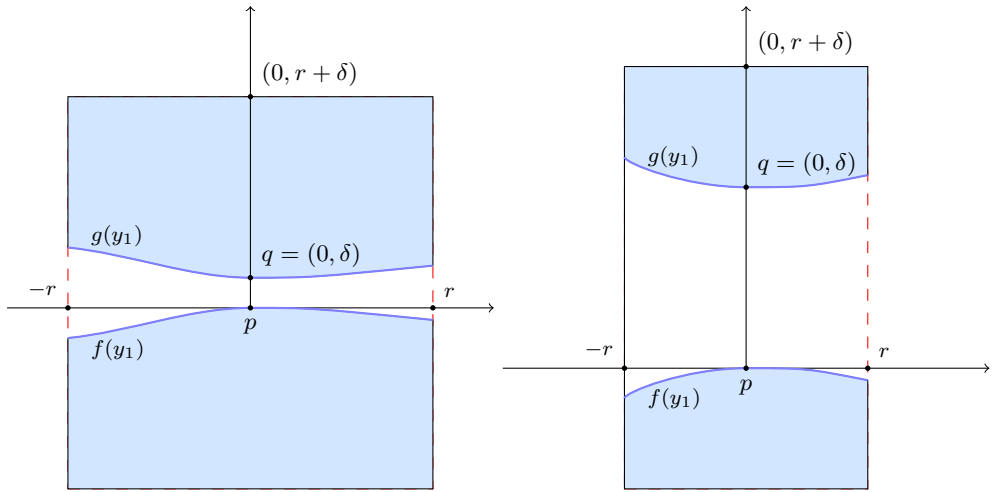

\begin{proposition}\label{prop:main_alpha_all}
Let $0 < \alpha < 1$, $k :=  \lceil 2 \alpha \rceil,$ and $1 \geq \gamma  \geq   2 \alpha  - \lceil 2 \alpha \rceil +1.$
Suppose that $\o$ is a $C^{k,\gamma}$-patch solution and there is a pair of admissible points $p,q \in \p \Omega$ at some time $t$.

Then the difference of the normal to patch components $u_n$ of the velocity $u$ defined by \eqref{eq:biot-savart} satisfies
\begin{equation}\label{udiffeq}
\big| u_n(p,t) - u_n(q,t)  \big| \leq C \|\p \Omega(t)\|_{C^{k,\gamma}}^{\frac{k+ 2\alpha -1   }{k+\gamma -1 }}|p-q|.  
\end{equation}
where
the constant $C$ depends  on $\alpha$, $\gamma$, $\rho,$ and the couplings $\theta_j$ of different patches.
\end{proposition}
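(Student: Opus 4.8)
The plan is to reduce the whole estimate to a single principal value integral in coordinates adapted to the pair $p,q$, and then exploit a reflection (folding) symmetry in the tangential variable together with the boundary regularity. Write $M:=\|\p\Omega(t)\|_{C^{k,\gamma}}$ and $\delta:=|p-q|$, and let $r=r(t)$ be given by \eqref{rad}; we may assume $M\ge 1$. Since $p,q$ realize the local minimal distance between the two curves $\cC_1,\cC_2$ of Definition \ref{def:admissible}, the segment $pq$ is orthogonal to both curves. I therefore choose coordinates with $p$ at the origin, $q=(0,\delta)$ on the positive $y_2$-axis, and both tangent lines horizontal; in $B_{2r}(p)\cup B_{2r}(q)$ the two curves are then graphs $y_2=f(y_1)$ through $p$ and $y_2=g(y_1)$ through $q$, with $f(0)=f'(0)=g'(0)=0$ and $g(0)=\delta$, exactly as in the proof of Lemma \ref{uinf}; the choice of $r$ guarantees genuine graphs with $|y_2|\ll y_1$ on them. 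Because $p$ and $q$ share the first coordinate, the normal velocity difference reduces to $u_n(p)-u_n(q)=(0,\,u_2(p)-u_2(q))$, and by \eqref{eq:biot-savart} the relevant scalar is
\[
I=\int_\Omega y_1\Big[(y_1^2+y_2^2)^{-1-\alpha}-(y_1^2+(y_2-\delta)^2)^{-1-\alpha}\Big]\,\omega(y)\,dy=\int_\Omega y_1\,(A-B)\,\omega\,dy,
\]
whose integrand is odd in $y_1$.

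The main point is that \emph{both} singularities lie on the $y_2$-axis, so both kernels $A,B$ are even in $y_1$ and the single reflection $y_1\mapsto-y_1$ folds the integral into
\[
I=\int_{y_1>0} y_1\,(A-B)\,\big[\omega(y_1,y_2)-\omega(-y_1,y_2)\big]\,dy .
\]
The folded weight is supported in two thin slivers, around $y_2\approx 0$ and $y_2\approx\delta$, of vertical width $\lesssim M\,y_1^{\,k+\gamma}$; this is the $C^{k,\gamma}$ refinement of the bound $|\overline f-\underline f|\le\|\p\Omega\|_{C^{1,\gamma}}|y_1|^{1+\gamma}$ of Lemma \ref{uinf}, obtained by Taylor expanding $f,g$ to order $k$ and applying the Hölder bound on the top derivative. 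I then split $\{y_1>0\}$ into a far part $y_1\ge r$, an intermediate part $2\delta\le y_1\le r$, and a diagonal part $0\le y_1\le 2\delta$. In the far and intermediate ranges $|y-p|,|y-q|\sim y_1\gg\delta$, so the mean value theorem gives $|A-B|\lesssim \delta\,y_1^{-3-2\alpha}$, producing the factor $\delta$ directly; the resulting integral is $\lesssim \delta\,M\!\int y_1^{\,k+\gamma-2-2\alpha}\,dy_1$, which by the hypothesis $\gamma\ge 2\alpha-k+1$ (equivalently $k+\gamma-1\ge 2\alpha$) is controlled by its upper endpoint and yields $\lesssim \delta\,M^{2\alpha/(k+\gamma-1)}\le \delta\,M^{(k+2\alpha-1)/(k+\gamma-1)}$ since $k\ge 1$. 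The far part is handled identically, using only $|\omega|\lesssim\sum_j|\theta_j|$ and $\operatorname{supp}\omega$ bounded, which is the source of the dependence of $C$ on $\rho$ and on the couplings $\theta_j$.

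The delicate region, and the main obstacle, is the diagonal part $0\le y_1\le 2\delta$, where the two singularities are no longer separated and the cheap bound $|A-B|\lesssim\delta\,y_1^{-3-2\alpha}$ is unavailable. Here I would estimate the $A$- and $B$-contributions separately, each through the same $y_1$-folding, bounding the near-singular kernel by $y_1^{-2-2\alpha}$ (respectively $\delta^{-2-2\alpha}$ on the sliver at height $\delta$). The sliver integral becomes $\lesssim M\!\int_0^{2\delta}y_1^{\,k+\gamma-1-2\alpha}\,dy_1\sim M\,\delta^{k+\gamma-2\alpha}$, and the crucial algebraic check is that $M\,\delta^{k+\gamma-2\alpha}\le \delta\,M^{(k+2\alpha-1)/(k+\gamma-1)}$ in the relevant regime $\delta\le r\sim M^{-1/(k+\gamma-1)}$; this reduces exactly to $k\ge 1$, which holds since $k=\lceil 2\alpha\rceil\ge 1$. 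Finally, the complementary regime $\delta\gtrsim r$ (the right-hand configuration in Figure \ref{fig:splash_scenario}) is easier: there $B_{2r}(p)$ and $B_{2r}(q)$ are disjoint, and I would bound $|u_n(p)|+|u_n(q)|\lesssim M^{(2\alpha-1)/(k+\gamma-1)}$ by the single-point argument of Lemma \ref{uinf} and then absorb the missing factor using $\delta\gtrsim r$ and $k\ge 1$. I expect the only genuinely borderline case to be the endpoint $\gamma=2\alpha-k+1$, where the intermediate integral produces a logarithm $\log(r/\delta)$; this should be reabsorbed by the strict gain that $k\ge 1$ provides in the diagonal estimate, but it is the one place where the bookkeeping of exponents must be carried out with care.
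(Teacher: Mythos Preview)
Your outline follows the paper closely: same adapted coordinates, same folding $y_1\mapsto -y_1$, same sliver bound $|\overline f-\underline f|\lesssim M\,y_1^{k+\gamma}$, and a comparable near/intermediate/far split. The diagonal and far estimates are fine in spirit (though the far part should be cut by the rectangle $[-r,r]\times[-r,r+\delta]$ rather than by $y_1\ge r$ alone, since outside that box the folded weight need not sit on slivers and $|y|\not\sim y_1$; the paper's $I_3$ handles this). Away from the endpoint your argument goes through. The genuine gap is exactly the endpoint logarithm you flag, and contrary to your hope it cannot be absorbed.

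In the intermediate range you use $|A-B|\lesssim\delta\,y_1^{-3-2\alpha}$, which with the sliver width $M y_1^{k+\gamma}$ and the kernel factor $y_1$ gives $\delta M\int_{2\delta}^{r}y_1^{\,k+\gamma-2-2\alpha}\,dy_1$. At the endpoint $k+\gamma=1+2\alpha$ the exponent is $-1$ and you get $\delta M\log(r/\delta)$. For $k=1$ (i.e.\ $0<\alpha\le\tfrac12$, $\gamma=2\alpha$) the target exponent $(k+2\alpha-1)/(k+\gamma-1)$ equals $1$, so you would need $\log(r/\delta)\lesssim1$, which fails as $\delta\to0$; your ``strict gain from $k\ge1$'' in the diagonal estimate becomes an equality here and provides no slack. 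The fix is to retain the factor produced by the mean value theorem: writing $A-B=c_\alpha\delta\,z_2\,(y_1^2+z_2^2)^{-2-\alpha}$ for some $z_2\in[y_2-\delta,y_2]$ and using $|z_2|\le\delta+|y_2|$ together with $y_1^2+z_2^2\ge y_1^2$ gives $|A-B|\lesssim\delta(\delta+|y_2|)\,y_1^{-4-2\alpha}$ on the sliver. The $\delta$-part then contributes $M\delta^2\int_\delta^r y_1^{\,k+\gamma-3-2\alpha}\,dy_1$, whose exponent is $-2$ at the endpoint so the integral is dominated by its lower limit and yields $M\delta$; the $|y_2|$-part, after integrating $|y_2|$ across the sliver and using $|f(\pm y_1)|\lesssim M y_1^{1+\gamma}$, carries $y_1$-exponent $k+2\gamma-2-2\alpha\ge\gamma-1>-1$, hence again no logarithm. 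This refined mean value bound is precisely the paper's \eqref{eq:refiend_MVT}.
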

\begin{proof}
For simplicity, we drop the time variable $t$ in the proof. Let us set up the coordinate system center at the point $p$ such that the segment of minimum distance is on the $x_2$-axis and $q=(0,\delta)$, $\delta :=|p-q|$. Parametrize the two patch interfaces by
$(x_1, f(x_1) )$ for the bottom piece and $(x_1, g(x_1) )$ for the top piece. It is not difficult to check that with our regularity assumptions on $\p \Omega$ and the choice of $r$ \eqref{rad},
such a representation is valid for $x_1 \in [-r,r].$
See Figure \ref{fig:splash_scenario} for an illustration. Note that when $|q-p|\gg r$, there might be other branches of the patches between $p-q$, but this does not affect our argument.

Denote the coupling constants of the top and bottom patches by $\theta_1$ and $\theta_2$ respectively. The vertical velocity at the points $p$ and $q$ coincides with the normal component of $u$ and is given by
$$
u_n(p) = \theta_1 \int_{E_p} \frac{y_1}{|y|^{2 + 2\alpha }} \, dy + \theta_2 \int_{E_q} \frac{y_1}{|y|^{2 + 2\alpha }} \, dy + \int_{\RR^2 \setminus E  } \frac{y_1}{|y|^{2 + 2\alpha }} \o(y) \, dy
$$
and
$$
u_n(q) = \theta_1 \int_{E_p} \frac{y_1}{|y -(0,\delta(t))|^{2 + 2\alpha }} \, dy + \theta_2 \int_{E_q} \frac{y_1}{|y - (0,\delta(t))|^{2 + 2\alpha }} \, dy + \int_{\RR^2 \setminus E  } \frac{y_1}{|y -(0,\delta(t)) |^{2 + 2\alpha }} \o(y) \, dy
$$
where $E=  E_p  \cup E_q$ and
$$
E_p := \{ x\in \RR^2: - r \leq  x_1   \leq  r,\; \;  -r \leq   x_2 \leq  f(x_1 )     \}
$$
while
$$
E_q := \{ x\in \RR^2:  - r \leq  x_1\leq  r  ,\;  \;  g(x_1 )\leq x_2 \leq  r +\delta   \}
$$

The difference of normal velocities  reads
\begin{align*}
u_n(q) - u_n(p)  & =  \theta_1 \int_{E_p}y_1 \left( \frac{1}{|y|^{2 + 2\alpha }} - \frac{ 1}{|y - (0,\delta )|^{2 + 2\alpha }}  \right)  \, dy + \theta_2 \int_{E_q}y_1 \left( \frac{1}{|y|^{2 + 2\alpha }} - \frac{ 1}{|y - (0,\delta)|^{2 + 2\alpha }}  \right)  \\
& \qquad\qquad +   \int_{\RR^2 \setminus E  } y_1 \left( \frac{1}{|y|^{2 + 2\alpha }} - \frac{ 1}{|y - (0,\delta)|^{2 + 2\alpha }}  \right) \o(y) \, dy\\
& := I_1 +I_2 +I_3.
\end{align*}

\noindent
{\bf Estimates of $I_1$}

We first split the integral by $y_2$ axis
\begin{equation}\label{eq:I_1}
\begin{aligned}
I_1 &= \theta_1 \int_{0 \leq y_1 \leq r } \int_{ -r \leq y_2 \leq  f(y_1)} y_1 \left( \frac{1}{|y|^{2 + 2\alpha }} - \frac{ 1}{|y - (0,\delta )|^{2 + 2\alpha }}  \right)  \, dy\\
& \qquad -  \theta_1 \int_{0 \leq y_1 \leq r  } \int_{ -r \leq y_2 \leq  f(-y_1)} y_1 \left( \frac{1}{|y|^{2 + 2\alpha }} - \frac{ 1}{|y - (0,\delta )|^{2 + 2\alpha }}  \right)  \, dy
\end{aligned}
\end{equation}
where in the second integral we have applied a change of variable $y_1 \mapsto -y_1$.

To exploit the odd-in-$y_1$ symmetry, for any $y_1 \geq 0$ we introduce
$$
\overline{f}(y):= \max\{f(y), f(-y)) \} \quad \text{and }\quad \underline{f}(y):= \min\{f(y), f(-y)) \}.
$$
With these, due to the opposite signs in \eqref{eq:I_1}, we have
\begin{align}\label{aux10151}
\big|  I_1 \big| \leq  \theta_1  \int_{     0\leq  y_1  \leq r} \int_{\underline{f}(y_1) \leq y_2 \leq \overline{f}(y_1)  } y_1 \left| \frac{1}{|y|^{2 + 2\alpha }} - \frac{ 1}{|y - (0,\delta)|^{2 + 2\alpha }}  \right|  \, dy .
\end{align}

To facilitate the estimates let us note that since $f(0)=f' (0)= 0$, when $f$ is $C^{1, \gamma}$, we have
$$
| \overline{f}(y_1) -    \underline{f}(y_1)| \lesssim \|\p \Omega   \|_{C^{1,\gamma} }  |y_1|^{1+\gamma }
$$
and when $f$ is $C^{2, \gamma}$, by the Fundamental Theorem of Calculus we have
$$
| \overline{f}(y_1) -    \underline{f}(y_1)| = \left| \int_0^{y_1} (f'( x ) + f'(-x)) dx \right| = \left| \int_0^{y_1}\int_0^{x}  ( f''( z ) - f''(-z)) \,dz dx \right|  \lesssim \|\p \Omega   \|_{C^{2,\gamma} }   |  y_1 |^{2+\gamma}.
$$
We may write these two as one estimate with $k =1,2:$
\begin{equation}\label{eq:smooth_coutour}
 | \overline{f}(y_1) -    \underline{f}(y_1)|  \lesssim \|\p \Omega   \|_{C^{k,\gamma} }   |  y_1 |^{k+\gamma}.
\end{equation}





By the Mean Value theorem, for some $z_2$ lying between $y_2$ and $y_2 - \delta$ we have
\begin{equation}\label{eq:refiend_MVT}
\left| \frac{1}{|y|^{2 + 2\alpha }} - \frac{ 1}{|y - (0,\delta)|^{2 + 2\alpha }}  \right| = c_\alpha   \delta   \frac{|z_2|}{ (y_1^2 + z_2^2)^{2+\alpha}} \leq \frac{c_\alpha \delta  (\delta+ |y_2|)}{ |y_1|^{4 +2\alpha}}  .
\end{equation}

To bound $I_1$, we consider two separate integral regions in \eqref{aux10151} where $ \delta \leq y_1 \leq r$ and where $ y_1 \leq \min\{\delta,r\}$. A direct computation using \eqref{aux10151} and \eqref{eq:refiend_MVT} shows that
\begin{align*}
I_{1} & \lesssim  \int_{  y_1   \leq  \min\{\delta,r\}} \int_{\underline{f}(y_1) \leq y_2 \leq \overline{f}(y_1)  } \frac{dy}{   y_1^{1+2\alpha}   }    +
\delta \int_{     \delta \leq  y_1  \leq r }  \int_{\underline{f}(y_1) \leq y_2 \leq \overline{f}(y_1)  } \frac{(\delta+ |y_2|)dy}{y_1^{3+2\alpha}}
.
\end{align*}
Here in the first integral we use a simple direct estimate on the integrand in \eqref{aux10151}. 
Note that the set $ \delta \leq y_1 \leq r$ may be empty, in which case we do not need to bound the latter integral. Then, applying \eqref{eq:smooth_coutour}, we have
\begin{equation}\label{eq:I_1_final}
\begin{aligned}
 I_1   &\lesssim \|\p \Omega   \|_{C^{k,\gamma} }   \int_{     0\leq  y_1  \leq  \min\{\delta,r\} }     y_1^{  k+\gamma -2\alpha-1 }     \, dy_1 +\|\p \Omega   \|_{C^{k,\gamma} } \delta^2 \int_{     \delta \leq  y_1  \leq r } y_1^{k+\gamma-3 -2\alpha}\, dy_1\\
&\quad +  \delta \int_{     \delta \leq  y_1  \leq r } \int_{\underline{f}(y_1) \leq y_2 \leq \overline{f}(y_1)  } |y_2|y_1^{ -3 -2\alpha}\, dy  .
\end{aligned}
\end{equation}


Since $3 \geq   k+\gamma \geq  1+2\alpha,$ the first two integrals in \eqref{eq:I_1_final} are bounded by $\|\p \Omega   \|_{C^{k,\gamma} } \delta r^{k+\gamma-1 -2\alpha}$. For the last integral in \eqref{eq:I_1_final}, we use \eqref{eq:smooth_coutour} together with a non-optimal bound (only optimal when $k=1$)
\begin{equation}\label{eq:smooth_coutour2}
 | \overline{f}(y_1) +    \underline{f}(y_1)|  \leq \|\p \Omega   \|_{C^{k,\gamma} }   |  y_1 |^{1+\gamma}.
\end{equation}
to obtain that
\begin{align*}
\int_{     \delta \leq  y_1  \leq r } \int_{\underline{f}(y_1) \leq y_2 \leq \overline{f}(y_1)  } |y_2|y_1^{ -3 -2\alpha}\, dy &\lesssim  \int_{     \delta \leq  y_1  \leq r }  \Big|  \underline{f}(y_1) -   \overline{f} (y_1)    \Big| \Big|  \underline{f}(y_1) +   \overline{f} (y_1)    \Big|  y_1^{ -3 -2\alpha}\, dy_1 \\
&\lesssim \|\p \Omega   \|_{C^{k,\gamma} }^2 \int_{     \delta \leq  y_1  \leq r }     y_1^{ k+2 \gamma-2 -2\alpha}\, dy .
\end{align*}
Since $k+ 2\gamma -2 -2\alpha \geq \gamma -1 >-1$, combining the terms in \eqref{eq:I_1_final} we obtain
\begin{align}
I_{1}     &\lesssim 
\|\p \Omega   \|_{C^{k,\gamma} } \delta r^{ k+\gamma -2\alpha-1} \big( 1+ \|\p \Omega   \|_{C^{k,\gamma} }      r^{ \gamma} \big).    \label{I1fin}
\end{align}




\noindent
{\bf Estimates of $I_2$}

Similarly to the estimation of $I_1$, for any $y  \geq 0$ we introduce
$$
\overline{g}(y):= \max\{g(y), g(-y)) \} \quad \text{and }\quad \underline{g}(y):= \min\{g(y), g(-y)) \}.
$$
to obtain
\begin{align*}
 |I_2 | \leq  \int_{     0\leq  y_1  \leq r}\int_{ \underline{g}(y_1) \leq y_2 \leq \overline{g} (y_1)| }  y_1 \left| \frac{1}{|y|^{2 + 2\alpha }} - \frac{ 1}{|y - (0,\delta)|^{2 + 2\alpha }}  \right|  \, dy.
\end{align*}

Since $g(0)=\delta$ and  $g' (0)= 0$, by the same reasoning for \eqref{eq:smooth_coutour}, we have for $k=1,2,$
$$
| \overline{g}(y_1) -    \underline{g}(y_1)| \leq\|\p \Omega   \|_{C^{k,\gamma} }   |  y_1 |^{k+\gamma},
$$
and
$$
| \overline{g}(y_1) +    \underline{g}(y_1)| \leq 2\delta+ \|\p \Omega   \|_{C^{k,\gamma} }   |  y_1 |^{1+\gamma}.
$$
A similar argument shows that $I_2$ shares the same estimate as $I_1$:
\begin{align}\label{I2fin}
I_2   \lesssim
\|\p \Omega   \|_{C^{k,\gamma} }   \delta   r^{ k+\gamma -2\alpha-1} \big( 1+ \|\p \Omega   \|_{C^{k,\gamma} }      r^{ \gamma} \big).
\end{align}

\noindent
{\bf Estimates of $I_3$:}

For the last term $I_3$, we consider two cases:  $2\delta < r$ and  $2\delta \geq  r$ and will show that in both cases
$$
I_3    \lesssim \delta      r^{- 2\alpha }  .
$$

\noindent
{\it Case 1: $ 2\delta \leq   r$:}

By the Mean Value theorem,
\begin{equation}\label{eq:mean_value22}
\left| \frac{1}{|y|^{2 + 2\alpha }} - \frac{ 1}{|y - (0,\delta)|^{2 + 2\alpha }}  \right| \leq c_\alpha  \delta \max_{z_2 \in [y_2-\delta,y_2]}\frac{|z_2|}{ (y_1^2 + z_2^2)^{2+\alpha}}. 
\end{equation}
For $y \in (\RR^2\setminus E) \cap {\rm supp}(\omega)$ we have, due to the definition of admissible points, that $|y| \geq r.$ Since $\delta \leq r/2,$ it follows that
\begin{equation}\label{aux10152} \max_{z_2 \in [y_2-\delta,y_2]}\frac{|z_2|}{ (y_1^2 + z_2^2)^{2+\alpha}} \leq \max_{z_2 \in [y_2-\delta,y_2]} \frac{1}{ (y_1^2 + z_2^2)^{\frac{3}{2}+\alpha}} \leq 2^{3+2\alpha} |y|^{-3-2\alpha}. \end{equation}
It follows from \eqref{eq:mean_value22}, \eqref{aux10152} and the definition of $I_3$ that
\begin{align}
I_3 &  \lesssim  \delta   \int_{   |y|\geq  r } \frac{|\o(y)|}{ |y  |^{2+2\alpha}}\,dy  \lesssim \delta      r^{- 2\alpha }.
\end{align}

\noindent
{\it Case 2: $ 2\delta \geq   r$:}

In this case, we split the integral:
\begin{align*}
I_3 & =  \int_{ \widetilde{E}^c  } y_1 \left( \frac{1}{|y|^{2 + 2\alpha }} - \frac{ 1}{|y - (0,\delta)|^{2 + 2\alpha }}  \right) \o(y) \, dy  + \\ & \int_{|y - (0, \delta/2)|> 4\delta   } y_1 \left( \frac{1}{|y|^{2 + 2\alpha }} - \frac{ 1}{|y - (0,\delta)|^{2 + 2\alpha }}  \right) \o(y) \, dy
 := I_{31} + I_{32}
\end{align*}
where $\widetilde{E}^c$ is defined by
$$
\widetilde{E}^c : = \{y \in \Omega (t) : |y - (0, \delta/2)|\leq 4\delta    \}  \cap E^c,
$$
with $(0, \frac{\delta}{2})$ being the mid-point between $p$ and $q$.

For $I_{31}$,  we use the triangle inequality and the bound
$$
\left| \frac{y_1}{|y|^{2 + 2\alpha }}\right|   + \left| \frac{y_1}{|y - (0,\delta)|^{2 + 2\alpha }}  \right| \leq    \frac{1}{|y|^{1 + 2\alpha } } + \frac{ 1}{|y - (0,\delta)|^{1 + 2\alpha }}
$$
to obtain
\begin{align*}
I_{31} & \lesssim  \int_{  \widetilde{E}^c  }  \frac{1}{|y|^{1+2\alpha }}  \, dy  +  \int_{  \widetilde{E}^c  }   \frac{ 1}{|y - (0,\delta)|^{1 + 2\alpha }}   \, dy \\
&\lesssim   \int_{  r \leq  |y|\leq 5\delta   }  \frac{1}{|y|^{1+2\alpha }}  \, dy \lesssim \delta^{1-2\alpha} \lesssim \delta r^{-2\alpha}.
\end{align*}

For $I_{32}$, similarly to \eqref{eq:mean_value22} and \eqref{aux10152}, for $y \in \RR^2\setminus \widetilde{E}^c $ we have
\begin{equation*}
\left| \frac{1}{|y|^{2 + 2\alpha }} - \frac{ 1}{|y - (0,\delta)|^{2 + 2\alpha }}  \right| \leq c_\alpha  \delta \max_{ z_2 }\frac{z_2}{ (y_1^2 + z_2^2)^{2+\alpha}} \lesssim   \delta \frac{1}{ |y  |^{3+2\alpha}}.
\end{equation*}

Inserting this bound into the integral $I_{32}$ gives
\begin{align*}
I_{32} & \lesssim  \delta   \int_{   |y|\geq 2\delta  } \frac{1}{ |y  |^{2+2\alpha}}  \, dy   \\
& \lesssim \delta^{ 1-  2\alpha }  \lesssim   \delta   r^{- 2\alpha }.
\end{align*}

Putting together $I_{31}$ and $I_{32}$ yields
\begin{equation*}
I_3 \lesssim \delta      r^{- 2\alpha }.
\end{equation*}

\noindent
{\bf Combined estimates:}

Now observe that with our choice of $r$ \eqref{rad}, $r^{k+\gamma  -1 } \leq \|\p \Omega\|_{C^{k,\gamma}}^{-1},$ and combining the estimates of $I_1$, $I_2$, and $I_3$, we arrive at
\eqref{udiffeq}:
\begin{align*}
\big| u_n(q) - u_n(p)\big|   & \lesssim \delta \Big( r^{-2\alpha}  + r^{ k+\gamma -2\alpha-1} \|\p \Omega   \|_{C^{k,\gamma} } \big(1 + \|\p \Omega   \|_{C^{k,\gamma} }  r^{\gamma}\big)    \Big)  \\
 & \lesssim \delta \Big( r^{-2\alpha}  + r^{-2\alpha} \big(1 + \|\p \Omega   \|_{C^{k,\gamma} }  r^{\gamma}\big)    \Big)  \\
& \lesssim \delta  \|\p \Omega   \|_{C^{k,\gamma} }^{\frac{2\alpha }{ k+\gamma  -1 } } \big( 1+    \|\p \Omega   \|_{C^{k,\gamma} }  r^{\gamma}    \big) \lesssim   \|\p \Omega \|_{C^{k,\gamma}}^{\frac{k+ 2\alpha -1   }{ k+\gamma  -1 }}|p-q|.
\end{align*}
Here the constants in the last two inequalities may depend on $\rho.$ 
\end{proof}

\subsection{Estimates on the distance and proof of main results}

Now fix $1>\alpha>0$ and suppose that we have a $C^{k,\gamma}$ patch solution $\Omega(t) = \cup_{j=1}^N \Omega_j(t)$ with $k = \lceil 2 \alpha \rceil,$ and $1 \geq \gamma  \geq   2 \alpha  - \lceil 2 \alpha \rceil+1$
that forms a splash singularity described in Definition~\ref{defss} at some point $x_0$ at time $T$ (it is not hard to generalize the argument to the case where
splash happens simultaneously at a finite number of different points).

For any $i\neq j$, let us define $d_{ij}(t) = {\rm dist}(\Omega_i(t), \Omega_j(t)).$ To control patch self-intersections, fix a small parameter $\eta>0$, and define
$$
d_{ii}(t) = {\rm min}_{s,r: |s-r| \geq \eta} |x_i(s)-x_i(r)|,
$$
where $x_i(s)$ is the arc length parametrization of $\Omega_i(t).$
Given any splash singularity, one can choose sufficiently small $\eta>0$ so that we must have $d_{ii}(t) \rightarrow 0$ as $t \rightarrow T;$ indeed, it suffices to choose $\eta = \rho,$ the radius of the ball from Definition~\ref{defss}.

Finally, define the  minimal distance $d(t)$ between different patches (or different branches of the same patch if itself-intersects) by
\begin{equation}\label{ddef}
d(t) = {\rm min}_{1 \leq i,j \leq n} d_{ij}(t).
\end{equation}
Due to our definition of splash singularity, there exists $\ep>0$ such that for every $T-\ep \leq t < T,$ all points
such that $d(t) = |p-q|$ must lie in $B_{\rho/4}(x_0)$. Note that due to our definition of patch solution and boundedness of the normal component of the velocity $u_n$ at $\p \Omega$
for our patch regularity assured by Lemma~\ref{uinf}, all functions $d_{ik}(t)$ are Lipschitz in time, and therefore $d(t)$ retains this property.

Now we are ready to prove
\begin{proposition}\label{dist1015p}
Let $0 < \alpha < 1$, $k :=  \lceil 2 \alpha \rceil,$ and $1 \geq \gamma  \geq   2 \alpha  - \lceil 2 \alpha \rceil+1.$
Suppose that $\o$ is a $C^{k,\gamma}$-patch solution that forms a
splash singularity as in Definition \ref{defss} at time $T$. Then the minimal distance $d(t)$ defined in \eqref{ddef} is a Lipschitz function of time and there exists $\ep>0$ such that for almost every time $t \in [T-\ep,T)$
\begin{equation}\label{dist1015e}
d'(t) \geq - C d(t) \|\p \Omega \|_{C^{k,\gamma}}^{\frac{k+ 2\alpha -1    }{k+\gamma-1}}.
\end{equation}
\end{proposition}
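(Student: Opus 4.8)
The plan is to bound the lower right Dini derivative of $d(t)$ at its points of differentiability and then invoke Proposition~\ref{prop:main_alpha_all}. That $d(t)$ is Lipschitz has already been recorded above: it follows from the uniform bound on the normal velocity in Lemma~\ref{uinf} together with the evolution relation in Definition~\ref{splash1016}(2). Being Lipschitz, $d$ is differentiable at a.e. $t \in [T-\ep, T)$, and at each such time it suffices to establish a lower bound for $\liminf_{h \to 0^+}(d(t+h)-d(t))/h$. The first step is to linearize the boundary motion: the relation $d_H(\p\Omega(t+h), X_{u_n}^h[\p\Omega(t)]) = o(h)$ from Definition~\ref{splash1016}(2), combined with the fact that the set-distance is $1$-Lipschitz in each argument with respect to $d_H$, lets me replace the true boundary at time $t+h$ by its normal displacement up to an $o(h)$ error in each pairwise distance, giving
\[
d(t+h) = \min_{k,l}\ \min_{a \in \p\Omega_k,\, b \in \p\Omega_l} \bigl| (a + h\,u_n(a)) - (b + h\,u_n(b)) \bigr| + o(h).
\]

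Next I would extract the Dini bound. Let $(a_h, b_h)$, on patches $\Omega_{k_h}, \Omega_{l_h}$, realize the minimum on the right. Since $a_h, b_h$ are a pair entering the definition \eqref{ddef} of $d(t)$ (on distinct patches, or $\eta$-separated on a single patch in the self-intersection case), one has $|a_h - b_h| \geq d(t)$, and the triangle inequality yields
\[
d(t+h) \geq d(t) - h\,\bigl| u_n(a_h) - u_n(b_h) \bigr| + o(h).
\]
By compactness and finiteness of the number of patch pairs, along a subsequence $(a_h, b_h) \to (p', q')$, where $(p', q')$ is a minimizing pair for $d(t)$ and hence lies in $B_{\rho/4}(x_0)$ by the geometric setup. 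The continuity of $u_n$ from Lemma~\ref{uinf} then gives $|u_n(a_h)-u_n(b_h)| \to |u_n(p')-u_n(q')|$, so that $\liminf_{h\to 0^+}(d(t+h)-d(t))/h \geq -|u_n(p')-u_n(q')|$.

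It remains to verify that $(p', q')$ is an admissible pair in the sense of Definition~\ref{def:admissible} and to apply Proposition~\ref{prop:main_alpha_all}. Since $r \leq \rho/4$ by \eqref{rad} and $p', q' \in B_{\rho/4}(x_0)$, both balls $B_{2r}(p')$, $B_{2r}(q')$ are contained in $B_\rho(x_0)$, where by Definition~\ref{defss} the boundary consists of exactly two disjoint simple $C^{k,\gamma}$ curves; the curvature-adapted choice of $r$ in \eqref{rad} ensures that $\p\Omega \cap (B_{2r}(p') \cup B_{2r}(q'))$ is two disjoint arcs, one through $p'$ and one through $q'$, while minimality of $|p'-q'|$ supplies the local-distance condition. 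Proposition~\ref{prop:main_alpha_all} then gives
\[
|u_n(p') - u_n(q')| \leq C\,\|\p\Omega\|_{C^{k,\gamma}}^{\frac{k+2\alpha-1}{k+\gamma-1}}\,|p'-q'| = C\,\|\p\Omega\|_{C^{k,\gamma}}^{\frac{k+2\alpha-1}{k+\gamma-1}}\,d(t),
\]
and combining this with the Dini bound yields \eqref{dist1015e} at every point of differentiability of $d$.

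The step I expect to be most delicate is the linearization. The patch solution is prescribed only through its normal velocity via a Hausdorff-distance relation, and for $\alpha \geq 1/2$ the tangential velocity is infinite, so individual Lagrangian trajectories are unavailable. What saves the argument is that $d(t)$ depends on the boundary only as a set, and Definition~\ref{splash1016}(2) controls exactly the set-valued normal displacement; the tangential velocity therefore never enters, in agreement with the classical fact that the minimal-distance segment meets both curves orthogonally. A secondary point is the uniformity of the $o(h)$ errors over the compact family of near-minimizing pairs, which follows from the uniform $L^\infty$ bound and continuity of $u_n$ established in Lemma~\ref{uinf}.
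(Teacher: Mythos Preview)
Your proof is correct and follows essentially the same route as the paper: linearize the boundary motion via Definition~\ref{splash1016}(2), reduce to a minimizing pair at time $t$, verify admissibility using the splash geometry and the choice of $r$ in \eqref{rad}, and apply Proposition~\ref{prop:main_alpha_all}. The only stylistic difference is in the compactness step: the paper fixes $\ep_1>0$, splits $\cC_1\times\cC_2$ into an $\ep_1$-neighborhood $S_{\ep_1}$ of the minimizing set $S$ and its complement, bounds each piece separately, and sends $\ep_1\to 0$; you instead take the minimizer $(a_h,b_h)$ of the displaced distance and extract a subsequential limit $(p',q')\in S$. Both arguments rely on the same ingredients (compactness of the boundary, continuity of $u_n$ from Lemma~\ref{uinf}, and the uniform bound from Proposition~\ref{prop:main_alpha_all} over all minimizing pairs), and the subsequence issue you elide is harmless since the bound on $|u_n(p')-u_n(q')|$ is uniform over $S$.
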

\begin{proof}
Since $d(t)$ is Lipschitz, it is almost everywhere differentiable by the Rademacher theorem, and moreover $d(t_2)-d(t_1) = \int_{t_1}^{t_2} d'(t)\,dt$ (see e.g. \cite{EG}).
Fix a time $t \in [T-\ep,T);$ according to our definition of splash singularity,
$\p \Omega \cap B_\rho(x_0)$ consists of two disjoint simple curves that we will denote $\cC_1 (t)$ and $\cC_2 (t)$ and we can assume that $d(t) = {\rm dist}(\cC_1 (t) \cap B_{\rho/2}(x_0),
\cC_2 (t)\cap B_{\rho/2}(x_0)).$

 Due to our definition of patch solution, we have that for small $h>0$
\begin{equation}\label{distc1c2}
d(t+h) = {\rm dist}(\cC_1(t+h), \cC_2(t+h)) = {\rm dist} (X_{u_n(\cdot, t)}^h(\cC_1(t)),X_{u_n(\cdot, t)}^h(\cC_2 (t))) + o(h),
\end{equation}
where $o(h)$ means a quantity that goes to zero as $h \to 0^+$.

Our aim will be to derive a lower bound on $d(t+h)$ using \eqref{distc1c2}. Let $S \in \cC_1 (t) \times \cC_2 (t)$ be the set of all pairs of points $(p,q) \in \cC_1 (t) \times \cC_2 (t)$ such that $d(t)=|p-q|.$ Due to our definition of admissible points and \eqref{rad},
any pair $p,q$ is admissible and so the estimates of Proposition \ref{prop:main_alpha_all} apply.

To bound the first term on the right-hand side of \eqref{distc1c2}, let us fix a small number $\ep_1>0.$ Define a distance $\zeta$ on $ \cC_1 (t) \times \cC_2 (t)$ by \[
\zeta((x,y),(p,q)) = |x-p|+|y-q| \quad \text{for $(x,y), (q,p) \in \cC_1 \times \cC_2$.}
\]
Let us denote $S_{\ep_1}$ the set of pairs of points $(x,y)$ such that
$\zeta((x,y),S) < \ep_1.$  This admits a decomposition of $ \cC_1 (t) \times \cC_2 (t) $:
\begin{equation}
 \cC_1 (t) \times \cC_2 (t)=  S_{\ep_1} \cup S_{\ep_1}^c
\end{equation}
where the compact set $S_{\ep_1}^c := \cC_1 (t) \times \cC_2 (t) \setminus S_{\ep_1}$ consists of pairs $(x,y)$ that are away from the admissible ones.

Suppose first that $(x,y) \in S_{\ep_1}^c,$ that is, $\zeta((x,y), S) \geq \ep_1>0.$
Then there exists $\eta(\ep_1)>0$ such that
$|x-y| \geq d(t)+\eta(\ep_1).$ Indeed, $|x-y| - d(t)$ is a continuous function on the compact set $S_{\ep_1}^c$, and so it has a minimum that is clearly positive.
Therefore, for all $(x,y) \in S_{\ep_1}^c$, we have
\begin{equation}\label{eq:zetacase_1}
 |x+u_n(x)h-y - u_n(y)h| \geq |x-y| - 2\|u_n\|_{C(\p \Omega)} h \geq d(t)+\eta(\ep_1) -C(\Omega)h \geq d(t)
\end{equation}
for all $h>0$ sufficiently smaller than $\eta(\ep_1) $. Thus, the points that are not in $S_{\ep_1}$ are not important in derivation of the lower bound on $d'(t).$

Now consider $(x,y) \in S_{\ep_1}.$ Find $(p,q) \in S$ such that $\zeta((x,y),(p,q)) \leq \ep_1.$
Note that by Lemma \ref{uinf} and compactness of $\p \Omega,$ $u_n$ is uniformly continuous on $\p \Omega.$ Therefore,
since $|x-p| \leq \ep_1$ and $|y-q| \leq \ep_1,$ there exists $\sigma(\ep_1)$ such that $|u_n(x)-u_n(p)| \leq \sigma$
and  $|u_n(y)-u_n(q)| \leq \sigma,$ and $\sigma(\ep_1) \rightarrow 0$ as $\ep_1 \rightarrow 0.$
Applying these considerations along with the bound \eqref{udiffeq}, we find that for $(x,y) \in S_{\ep_1}$, the following bound holds:
\begin{align}
|x+u_n(x)h-y - u_n(y)h| &\geq |x-y| - h|u_n(x)-u_n(y)| \nonumber \\
&\geq  |p-q| - \zeta((x,y),(p,q))  \nonumber\\
&\qquad - h|u_n(x)-u_n(p)| -  h|u_n(p)-u_n(q)|- h|u_n(q)-u_n(y)| \nonumber\\
&\geq
d(t) - \ep_1 - 2h \sigma(\ep_1) - C h d(t) \|\p \Omega \|_{C^{k,\gamma}}^{\frac{k+ 2\alpha -1    }{k + \gamma  -1 }} \label{eq:zetacase_2}.
\end{align}

It follows from \eqref{distc1c2}, \eqref{eq:zetacase_1}, and \eqref{eq:zetacase_2} that given any $\ep_1>0$, for any sufficiently small $h>0$, we have
\[ d(t+h) \geq d(t) - \ep_1 -2h \sigma(\ep_1) - C   h d(t) \|\p \Omega \|_{C^{k,\gamma}}^{\frac{k+ 2\alpha -1    }{k + \gamma  -1}} -E(h), \]
where $E(h)=o(h).$
Since this inequality holds for every $\ep_1>0 $, \eqref{dist1015e} follows for a.e. $t  \in [T-\ep,T).$
\end{proof}


\begin{proof}[Proof of Theorems \ref{thm:main_thm_no_splash} and \ref{thm:main_thm_exp_bound}]
We only proof Theorem \ref{thm:main_thm_no_splash}, since given the Assumption \ref{as1}, the proof of Theorem \ref{thm:main_thm_exp_bound} follows along the same lines.

According to Proposition \ref{dist1015p}, for a.e. $t \in [T-\ep,T)$ before the splash singularity \eqref{dist1015e} holds.
By Gronwall lemma, we must have that
\[ \int_0^T \|\p \Omega(t) \|_{C^{k,\gamma}}^{\frac{k+ 2\alpha -1    }{  k+\gamma-1 }}\,dt = \infty, \]
completing the proof of Theorem \ref{thm:main_thm_no_splash}.

\end{proof}

{\bf Acknowledgement.} \rm AK acknowledges partial support
of the NSF-DMS grant 2006372 and of the Simons Fellowship grant 667842. XL is partially supported by the NSF-DMS grant 1926686.
We thank Andrej Zlatos for noticing a miscalculation in the earlier version of the paper.


\begin{thebibliography}{2}

\bibitem{CCFGG} A. Castro, D. Cordoba, C. Fefferman, F. Gancedo, and J. Gomez-Serrano, \it
Finite time singularities for the free boundary incompressible Euler equations, \rm
Ann. of Math. (2) {\bf 178} (2013), no. 3, 1061--1134


\bibitem{CCCGW} D.~Chae, P.~Constantin, D.~C\'ordoba, F.~Gancedo, and J.~Wu, \it
Generalized surface quasi-geostrophic equations with singular velocities, \rm
Comm. Pure Appl. Math. {\bf 65} (2012), no. 8, 1037--1066

\bibitem{c} J.-Y. Chemin, \it
Persistance de structures geometriques dans les fluides
incompressibles
bidimensionnels, \rm
Annales de l'\'Ecole Normale Sup\'erieure, {\bf 26} (1993), 1--26.

\bibitem{CIW} P. Constantin, G. Iyer, and J. Wu, \it Global regularity for a modified
critical dissipative quasi-geostrophic equation, \rm Indiana Univ. Math. J. {\bf 57} (2008), no. 6, 2681--2692

\bibitem{CMT} P.~Constantin, A.~Majda and E.~Tabak. \textit{Formation
of strong fronts in the 2D quasi-geostrophic thermal active scalar}.
Nonlinearity, 7, (1994), 1495--1533

\bibitem{Cord} D.~Cordoba, \textit{Nonexistence of simple hyperbolic
blow up for the quasi-geostrophic equation}, Ann. of Math.,
{\bf 148}, (1998), 1135--1152

\bibitem{CordF} D.~Cordoba and C.~Fefferman, \it Growth of solutions for QG and 2D Euler equations,
\rm  J. Amer. Math. Soc. {\bf 15} (2002), 665--670

\bibitem{CFdL}
D.~Cordoba, C.~Fefferman, and R.~de la Llave, \it On squirt singularities in hydrodynamics,
\rm SIAM J. Math. Anal. {\bf 36} (2004), no. 1, 204--213

\bibitem{CFMR}
D. Cordoba, M.A. Fontelos, A.M. Mancho, and J.L. Rodrigo,
\it Evidence of singularities for a family of contour dynamics
equations, \rm Proc. Natl. Acad. Sci. USA {\bf 102} (2005), 5949--5952

\bibitem{CCG}
A. Cordoba, D. Cordoba, and F. Gancedo, \it
Uniqueness for SQG patch solutions, \rm
Trans. Amer. Math. Soc. Ser. B {\bf 5} (2018), 1--31

\bibitem{CS} D. Coutand and S. Shkoller, \it On the finite-time splash and splat singularities for the
3-D free-surface Euler equations, \rm  Comm. Math. Phys. {\bf 325} (2014), no. 1, 143--183

\bibitem{EG}
L.C. Evans and R. Gariepy, {\it Measure Theory and Fine Properties of Funcitons}, CRC Press, 1992

\bibitem{g} F.~Gancedo, \it Existence for the $\alpha$-patch model
and the QG sharp front in Sobolev spaces,
\rm Adv. Math., {\bf 217} (2008), 2569--2598

\bibitem{GS} F.~Gancedo and R.~Strain, \it Absence of splash singularities for surface quasi-geostrophic sharp fronts and the Muskat problem,
\rm Proc. Natl. Acad. Sci. USA {\bf 111} (2014), 635--639

\bibitem{Held} I.~Held, R.~Pierrehumbert, S.~Garner and K.~Swanson.
\textit{Surface quasi-geostrophic dynamics}, J. Fluid Mech., 282, (1995), 1--20

 \bibitem{KL}    A.~Kiselev and C.~Li, \it Global regularity and fast small scale formation for Euler patch equation in a smooth domain,
\rm Comm. Partial Differential Equations {\bf 44} (2019), no. 4, 279--308

\bibitem{KRYZ}  A.~Kiselev, L.~Ryzhik, Y.~Yao and A.~Zlatos, \it Finite time singularity for the modified SQG patch equation, \rm
 Ann. of Math. {\bf 184} (2016), no. 3, 909--948

 \bibitem{KS} A.~Kiselev and V.~Sverak, \it Small scale creation for solutions of the incompressible two dimensional Euler equation, \rm Annals of Math. {\bf 180} (2014), 1205--1220

  \bibitem{HL} G.~Luo and T.~Hou, {\it Toward the finite-time blowup of the 3D axisymmetric
Euler equations: a numerical investigation,} Multiscale Model. Simul. {\bf 12} (2014) 1722--1776

\bibitem{MP} C.~Marchioro and M.~Pulvirenti, {\it Mathematical Theory of Incompressible Nonviscous Fluids}, Springer-Verlag, New York-Heidelberg, 1994

\bibitem{Rodrigo}
J.L. Rodrigo, \it On the evolution of sharp fronts for the
quasi-geostrophic equation, \rm
Comm. Pure Appl. Math., {\bf 58},
(2005), 821--866

\bibitem{SD1}  R. K. Scott and D. G. Dritschel, \it
Numerical Simulation of a Self-Similar Cascade of Filament Instabilities in the Surface Quasigeostrophic System,
\rm Phys. Rev. Lett. {\bf 112} (2014), 144505

\bibitem{SD2} R. K. Scott and D. G. Dritschel, \it Scale-invariant singularity of the surface quasigeostrophic patch,
\rm J. Fluid Mech. {\bf 863} (2019), R2, 12 pp. 86-08

\bibitem{Yudth} V.~I.~Yudovich, \it Non-stationary flows of an ideal incompressible fluid, \rm Zh Vych Mat, {\bf 3} (1963), 1032--1066

\bibitem{AZ} A. Zlatos, \it private communication

\end{thebibliography}
\end{document}